\newtheorem{theorem}{Theorem}
\newtheorem{proposition}[theorem]{Proposition}
\newtheorem{lemma}[theorem]{Lemma} 
\newtheorem{corollary}[theorem]{Corollary}
\newtheorem{remark}[theorem]{Remark}
\newcommand{\CAT}{{\mathrm{CAT}}}
\newcommand{\bbar}{\underline{{\mathrm B}}}
\newcommand{\ebar}{\underline{{\mathrm E}}}
\newcommand{\cu}{{\mathcal U}}
\newcommand{\rr}{{\mathbb R}}
\newcommand{\tilm}{{\widetilde M}}
\newcommand{\tiln}{{\widetilde N}}
\newcommand{\tilw}{{\widetilde W}}
\newcommand{\tilsw}{{\widetilde w}}
\newcommand{\tilt}{{\widetilde T}}
\newcommand{\tilv}{{\widetilde v}}
\newcommand{\sd}{\mathrm{Sd}}
\title{Every finite complex is the classifying space for proper bundles of a virtual Poincar\'e duality group.\thanks{2010 \emph{AMS Subject Classification} 55R35 (57P10, 20J05, 57S99) }}
\author{Raeyong Kim}
\date{\today}
\newenvironment{proof}[1][]{\begin{trivlist} \item[\hskip\labelsep
\emph{Proof#1.}]}{\foorp \end{trivlist}}%    Proof
\newcommand{\foorp}{{\unskip\nobreak\hfil\penalty50
 \hskip1em\vadjust{}\nobreak\hfil \vrule height3pt width3pt depth0pt
 \parfillskip=0pt \finalhyphendemerits=0 \par}}
\begin{document} 

\maketitle

\begin{abstract} 

We prove that every finite connected simplicial complex is homotopy equivalent to the quotient of a contractible manifold by proper actions of a virtually torsion-free group. As a corollary, we obtain that every finite connected simplicial complex is homotopy equivalent to the classifying space for proper bundles of some virtual Poincar\'e duality group.
%simplicial category versus CW category

\end{abstract} 

\section{Introduction}

Let $G$ be a discrete group. A $G$-CW-complex is, by definition, a CW-complex on which $G$ acts by permuting the cells and cell stabilizers act trivially on cells. A $G$-CW-complex $Y$ is said to be a model for $\ebar G$ if every cell stabilizer is finite and, for every finite subgroup $H \leq G$, the fixed set $Y^{H}$ is contractible. The existence of such a model can be established by Milnor's or Segal's argument for the construction of the universal space for $G$. (See \cite{LeaNuc} for the general construction). Applying an equivariant obstruction theory proves that any two models for $\ebar G$ are $G$-homotopy equivalent. We call $\ebar G$ the classifying space for proper $G$-actions. We write the quotient $\ebar G/G$ by $\bbar G$ and call it the classifying space for proper $G$-bundles. Our main theorem can be stated as follows.

\begin{theorem}\label{mainthm}
For any finite connected simplicial complex $X$, there exists a virtually torsion-free group $G$ with $\ebar G$  a cocompact manifold such that $\bbar G$ is homotopy equivalent to $X$.
\end{theorem}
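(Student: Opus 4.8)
The plan is to realize $X$, up to homotopy, as the underlying space of a cocompact \emph{non-positively curved orbifold} whose universal orbi-cover has underlying space a manifold. Suppose one produces a group $G$ acting properly and cocompactly by isometries on a $\CAT(0)$ topological manifold $M$ so that the orbit space $M/G$ is homotopy equivalent to $X$. Then for every finite subgroup $H\le G$ the fixed set $M^{H}$ is a nonempty convex subset of $M$ by the Cartan fixed point theorem, hence contractible; so $M$ is a model for $\ebar G$, it is by hypothesis a cocompact manifold, and $\bbar G=\ebar G/G=M/G\simeq X$. Virtual torsion-freeness will be arranged as part of the construction. The reason this can work even when $X$ is not aspherical is that $M$ itself is always contractible while $M/G$ need not be: the orbifold is aspherical although its underlying space is not. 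The prototype to keep in mind is the flat orbifold with underlying space $S^{2}$ and four cone points of order $2$, which is $\mathbb{R}^{2}/(\mathbb{Z}^{2}\rtimes\mathbb{Z}/2)$.

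The first step is to replace $X$ by a compact manifold: embed $X$ in a Euclidean space and let $C$ be a regular neighbourhood, a compact manifold with boundary onto which $X$ is a deformation retract. The singular locus of the orbifold structure we put on $C$ must consist of strata of codimension at least two: a codimension-one singular stratum would force a reflection-group structure near it, and then $C$ --- which is the orbit space, hence homotopy equivalent to $\bbar G\simeq X$ --- would have to be contractible, contrary to the generic $X$. So one chooses a suitable stratification of $C$ by closed submanifolds of codimension $\ge 2$ to carry finite local groups (cyclic groups acting by rotations on the normal $\mathbb{R}^{2}$'s, in the simplest case, so that $C$ remains an honest manifold and the orbi-covers are honest manifolds obtained by iterated branched covering).

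The heart of the argument is to choose the local groups and the combinatorics --- in the spirit of the complex-of-finite-groups constructions of Leary and Nucinkis --- so that, simultaneously: (i) the universal orbi-cover $M$ is a topological manifold, which forces the local groups to act orthogonally on the normal directions of the strata, compatibly along the stratification; (ii) $M$ is contractible, which is \emph{not} automatic because $C\simeq X$ need not be acyclic --- the cleanest way is to arrange that the orbifold is non-positively curved, so that $M$ carries a $G$-invariant $\CAT(0)$ metric, which one checks by verifying a link condition after subdividing; (iii) $G=\pi_{1}^{\mathrm{orb}}$ of the orbifold is of type $\mathrm{VF}$, hence virtually torsion-free, as in the Leary--Nucinkis examples. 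Granting this, the action of $G$ on $M$ is automatically proper (finite local groups give finite cell stabilizers) and cocompact ($C$ is compact), and the argument of the first paragraph finishes the proof.

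The main obstacle is reconciling requirements (i) and (ii): being a manifold pins the local structure of the singular strata down rigidly --- the local groups must be realized as orthogonal groups of prescribed dimensions, fitting together coherently along the stratification --- whereas contractibility of $M$ requires the local groups to be ``large enough'' in a combinatorial sense that is a priori unrelated, and one must in addition produce the $\CAT(0)$ metric (equivalently, verify the acyclicity of the relevant Mayer--Vietoris spectral sequence over the development) and confirm that the resulting group is virtually torsion-free. Making these constraints compatible is where the real work of the proof lies.
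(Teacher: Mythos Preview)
Your proposal is a plan rather than a proof: you explicitly leave the decisive construction undone (``where the real work of the proof lies''), and the obstacle you name at the end is real --- it is not clear that a $\CAT(0)$ \emph{manifold} with the required quotient can be built directly from a regular neighbourhood $C$ of $X$ by decorating strata with finite local groups. The idea the paper supplies, and which you are missing, is to decouple the two difficulties by first invoking a Kan--Thurston--type input: by Leary's metric Kan--Thurston theorem there is a finite locally $\CAT(0)$ cube complex $Y$ carrying a simplicial involution $\tau$ with $Y/\langle\tau\rangle\simeq X$. Now $Y$ is already aspherical, so its \emph{equivariant} regular neighbourhood $M$ (produced by Floyd's equivariant embedding trick) is an aspherical compact manifold with boundary, with an involution $\omega$ and $M/\langle\omega\rangle\simeq X$. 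Contractibility of the universal cover $\tilm$ is thus secured before any reflection/orbifold machinery is applied, and the fundamental chamber is the aspherical $M$, not $C\simeq X$.

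Two further points where your sketch goes astray. First, the construction that works uses codimension-one strata throughout: one applies the Davis reflection trick to $(\tilm,\partial\tilm)$, obtaining a right-angled Coxeter group $\tilw$ of reflections, and the group is $G=\tilw\rtimes\Gamma$ with $\Gamma$ the lifts of $\langle\omega\rangle$ to $\tilm$. Your claim that reflections would force the orbit space to be contractible applies only to the \emph{naive} reflection orbifold whose chamber is $C$ itself; once the chamber is the aspherical $M$ and one further quotients by the involution, the orbit space is $M/\langle\omega\rangle\simeq X$. Second, the resulting contractible manifold $\cu(\tilm,\tiln,\Gamma)$ is \emph{not} $\CAT(0)$, so your Cartan fixed-point argument is unavailable. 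The paper proves fixed sets are contractible by a comparison: for finite $F\le G$ not fixing an interior point one has $\cu(\tilm,\tiln,\Gamma)^{F}=\cu(C(\tiln),\tiln,\Gamma)^{F}$, and the latter Davis complex \emph{is} $\CAT(0)$ (though not a manifold); for $F$ fixing an interior point, $F$ is conjugate into $\Gamma$ and one uses that $\tilm^{F}$ is contractible because $Y$ was locally $\CAT(0)$. Virtual torsion-freeness then comes for free from the Coxeter group.
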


%Virtually Poincar\'e duality group and manifold
A group $\Gamma$ is called \emph{a Poincar\'e duality group of dimension $n$} if $H^{i}(\Gamma;A) \cong H_{n-i}(\Gamma;A)$ for any $\mathbb{Z}\Gamma$-module $A$. Many interesting  examples of Poincar\'e duality groups are manifold groups. More specifically, the fundamental group of a closed, aspherical $n$-dimensional manifold is a Poincar\'e duality group of dimension $n$. (The converse is false.) See \cite{Bro}, \cite{DavPOINCARE} for details about Poincar\'e duality groups. Finally, recall that a group \emph{virtually} has some property if a finite index subgroup has the property.

Let $T$ be a torsion-free finite index subgroup of $G$ in Theorem \ref{mainthm}. Then one can take $\ebar G$ as a model for $\ebar T = ET$, where $ET$ is the universal space for $T$. Since $ET/T$ is a closed aspherical manifold, $T$ is a Poincar\'e duality group. 

\begin{corollary}\label{maincor}
For any finite connected simplicial complex $X$, there is a virtual Poincar\'e duality group $G$ such that $\bbar G$ is homotopy equivalent to $X$.
\end{corollary}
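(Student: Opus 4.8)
The Corollary is immediate from Theorem~\ref{mainthm} by the remark preceding it, so it suffices to construct the group $G$ of the Theorem. Unwinding the definitions, I want a virtually torsion-free group $G$ acting properly and cocompactly on a contractible manifold $M$ so that $M$ is a model for $\ebar G$ (equivalently, $M^{H}$ is contractible for every finite subgroup $H\le G$) and $M/G\simeq X$; then $\ebar G:=M$ is a cocompact manifold and $\bbar G=M/G\simeq X$. The plan is to assemble this in two stages: first a purely homotopy-theoretic realization of $X$ as the quotient of a \emph{finite aspherical} complex by a finite group, and then a ``thickening'' of that picture to a manifold by a reflection-group construction in the spirit of Davis's reflection trick.

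For the first stage I would invoke the construction of \cite{LeaNuc} in the case of finite $X$: there is a group $\Gamma$ of type VF with a cocompact model $\ebar\Gamma$ and $\bbar\Gamma\simeq X$. Choosing a torsion-free normal subgroup $T\trianglelefteq\Gamma$ of finite index with $BT$ finite, the complex $Y:=\ebar\Gamma/T$ is a finite aspherical complex on which the finite group $Q:=\Gamma/T$ acts with $Y/Q=\bbar\Gamma\simeq X$; moreover, because $\ebar\Gamma$ is a model, the fixed loci of subgroups of $Q$ on $Y$ inherit a controlled (aspherical) homotopy type. For the second stage I would embed $Y$ $Q$-equivariantly in a linear $Q$-sphere, take a $Q$-invariant smooth regular neighborhood $N$ --- a compact aspherical $Q$-manifold with boundary which $Q$-equivariantly deformation retracts to $Y$, so $N/Q\simeq X$ --- and give $\partial N$ a $Q$-invariant flag triangulation $L$. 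With $W_{L}$ the right-angled Coxeter group on the vertices of $L$, on which $Q$ acts by permuting generators, form $\cu(W_{L},N)$, the mirror over a generator being the corresponding closed vertex-star in $\partial N$. Since links of simplices in the PL manifold $\partial N$ are PL spheres, $\cu(W_{L},N)$ is a manifold; since $N$ is aspherical and (by flagness of $L$) all mirrors and their intersections are contractible, $\cu(W_{L},N)$ is aspherical; and $W_{L}\rtimes Q$ acts on it with quotient $N/Q\simeq X$. Now set $M:=\widetilde{\cu(W_{L},N)}$, a contractible manifold, and let $G$ be the group of all lifts of the $(W_{L}\rtimes Q)$-action, so that $1\to\pi_{1}(\cu(W_{L},N))\to G\to W_{L}\rtimes Q\to1$. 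Then $G$ acts on $M$ properly (point stabilizers inject into the finite stabilizers of the proper action downstairs) and cocompactly ($M/G=N/Q$ is compact), and $G$ is virtually torsion-free because a torsion-free finite-index subgroup of the Coxeter group $W_{L}$ lifts to a torsion-free finite-index subgroup of $G$. As $M/G=N/Q\simeq X$, the only thing left is to see that $M$ is a model for $\ebar G$.

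I expect the verification that $M^{H}$ is contractible for every finite $H\le G$ to be the main obstacle. When $H$ projects into $W_{L}$ (so it covers a finite parabolic) this follows from contractibility of the mirror-intersections, exactly as for the Davis complex; the delicate point is the interaction with $Q$, where one must propagate the fixed-point structure of the \cite{LeaNuc}-model through the regular-neighborhood and reflection steps and check that the corresponding fixed loci of $\cu(W_{L},N)$ are aspherical, so that their lifts to $M$ are contractible. The cleanest way to finish would be to arrange a $G$-invariant $\CAT(0)$ piecewise-Euclidean metric on $M$ (a finite subgroup of the isometry group of a complete $\CAT(0)$ space has nonempty convex, hence contractible, fixed set), but producing one via a Davis--Januszkiewicz argument forces $N$, and so $Y$, to be nonpositively curved, which is not automatic; the usual remedy, hyperbolization, changes homotopy types and so threatens the identity $M/G\simeq X$. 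Reconciling these demands --- a proper cocompact action on a $\CAT(0)$ manifold whose quotient is the prescribed finite complex --- is the technical heart; I would attack it either by running \cite{LeaNuc} so as to obtain a cocompact $\CAT(0)$ cube-complex model for $\ebar\Gamma$ and thickening that to a nonpositively curved manifold while tracking the quotient, or, failing a convenient metric, by a direct combinatorial proof that the relevant fixed loci of $\cu(W_{L},N)$ are aspherical.
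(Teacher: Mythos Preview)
Your two-stage architecture --- realize $X$ as the quotient of a finite aspherical complex by a finite group, equivariantly thicken to a manifold with boundary, apply the equivariant reflection group trick, pass to the universal cover, and verify the $\ebar$-property --- is exactly the paper's strategy. The difference, and the resolution of the obstacle you yourself flag, lies in the input to stage one. The paper does not use \cite{LeaNuc}; it uses Leary's \emph{metric} Kan--Thurston theorem \cite{LeaMKT}, which produces a finite locally $\CAT(0)$ cube complex $Y$ together with a cubical \emph{involution} $\tau$ such that $Y/\langle\tau\rangle\simeq X$. Thus $Q=C_2$ from the outset, and the universal cover $\widetilde Y$ is genuinely $\CAT(0)$. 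After Floyd's equivariant regular-neighborhood thickening one has $M\simeq_{C_2} Y$, hence $\widetilde M^{F}\simeq \widetilde Y^{F}$ is a fixed set of an isometric $C_2$-action on a $\CAT(0)$ space, so contractible. The $\ebar$-verification then splits cleanly: finite subgroups that fix no interior point of a chamber are handled by comparison with the Davis complex $\cu(C(\widetilde N),\widetilde N,\Gamma)$, while those that do fix an interior point are conjugate into $\Gamma$, hence cyclic of order~$2$, and Lemma~\ref{finitesubgroup} reduces the fixed set to $\cu(\widetilde M^{F'},\widetilde N^{F'},V_{F'})$, contractible because $\widetilde M^{F'}$ is.

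So your diagnosis is right and your second proposed remedy (``run the construction so as to obtain a cocompact $\CAT(0)$ cube-complex model'') is precisely what the paper does --- but the correct source is \cite{LeaMKT}, not \cite{LeaNuc}, and a crucial bonus of that source is that the finite group is just $C_2$, which makes the classification of finite subgroups of $\widetilde W\rtimes\Gamma$ and the fixed-set analysis tractable. Working with a general $Q$ as you set it up would require controlling $\widetilde M^{F}$ for all finite $F\le\Gamma$, and without nonpositive curvature on $Y$ there is no evident mechanism for that; this is a real gap in your proposal as written, not merely a technicality.
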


The statement of Corollary \ref{maincor} is related to the theorem of Kan-Thurston, which says that every connected complex has the same homology as the classifying space for some group (See \cite{KanThu}). This theorem has been extended and generalized by a number of authors. For example, see \cite{BauDyeHel},\cite{Mau},\cite{Hau},\cite{LeaMKT},\cite{McD},\cite{LeaNuc},\cite{RKim}. Among many extensions and generalizations, Leary and Nucinkis proved in \cite{LeaNuc} that every connected CW-complex has the same homotopy type as the classifying space for proper bundles of some group. Corollary \ref{maincor} says that the group can be taken as a virtual Poincar\'e duality group if the simplicial complex is finite.

The proof of Theorem \ref{mainthm} consists of three steps. In Section \ref{equiembedding}, we outline the embedding trick, due to Floyd \cite{Flo}, for equivariantly embedding a simplicial complex with an involution into some Euclidean space. In Section \ref{equireflection}, we review the equivariant reflection group trick, due to Davis \cite{DavLea}. Finally, in Section \ref{resultproof}, we use the two tricks to construct a contractible manifold, whose quotient by some group $G$ is homotopy equivalent to a given finite simplicial complex $X$. We also prove that the contractible manifold is the classifying space for proper $G$-actions and introduce the torsion-free subgroup of finite index to complete the proof of Theorem \ref{mainthm}.

The paper is part of author's Ph.D. thesis. The author thanks the thesis advisor Ian Leary for his guidance throughout this research project. The author also thanks Jean Lafont for his careful reading of an earlier version of this paper.

\section{The Equivariant Embedding Trick}\label{equiembedding}

Let $Z$ be a finite simplicial complex with a simplicial map of period $p$. In \cite{Flo}, Floyd introduced the embedding trick, namely, $Z$ can be embedded in some Euclidean space  such that the restriction of specific coordinate changing map on $Z$ is the given simplicial map. We outline his construction in the case that $p=2$. See \cite[Section 2]{Flo} for the full construction.

Let $L$ be a finite connected simplicial complex with a simplicial involution $T$. Embed $L$ into $\rr^{n}$ for some $n$ and suppose that $\rr^{n}$ is triangulated so that $L$ is a subcomplex. Consider the following map.
$$\phi : L \to  \rr^{n} \times \rr^{n}(=\rr^{2n}) ,\qquad x \mapsto (x,T(x)).$$

Note that a cell in the cellular decomposition of $\rr^{2n}$ has the type of $s_1 \times s_2$, where each $s_i$ is a simplex in $\rr^{n}$. We use the first barycentric subdivision of this cellular decomposition for the subdivision of $\rr^{2n}$. 

By passing to the barycentric subdivision $\sd(L)$ of $L$, we obtain that $\phi$ is a simplicial homeomorphism of $\sd(L)$ onto a subcomplex of $\rr^{2n}$. Furthermore, the map $S : \rr^{n} \times \rr^{n} \to \rr^{n} \times \rr^{n}$ defined by $S(x,y)=(y,x)$ is simplicial and satisfies $S\circ\phi=\phi\circ T$, hence $\phi$ is an equivalence between $(\sd(L),T)$ and $(\phi(\sd(L)),S)$. Hereafter, we suppose that $L$ is a subcomplex of $\rr^{2n}$ and $S=T$ on $L$.

We may as well assume that $L$ is a full subcomplex of $\rr^{2n}$. For if not, $\sd(L)$ is a full subcomplex of $\sd(\rr^{2n})$. Let $U$ be the first regular neighborhood of $L$, i.e. the union of all open stars of vertices of $\sd(L)$ relative to $\sd(\rr^{2n})$. Then $K=\overline{U}$ is a manifold with boundary of dimension $2n$. Denote the boundary of $K$ by $\partial K$.

Let $v_0, \cdots, v_{k}, v_{k+1}, \cdots$ be vertices of $\rr^{2n}$, where $v_0,\cdots, v_{k}$ are vertices of $L$. Every point $x$ in $\rr^{2n}$ has a unique barycentric representation $\displaystyle{\sum t_{i}v_{i}}$. Furthermore, $K$ consists of points $x$ for which $\mathrm{max}(t_0, \cdots , t_k) \geq \mathrm{max}(t_{k+1}, \cdots )$ and $\partial K$ consists of points $x$ for which $\mathrm{max}(t_0, \cdots , t_k) = \mathrm{max}(t_{k+1}, \cdots )$. Consider $f : K \to L$ defined by $\displaystyle{f(x) = f(\sum t_{i}v_{i}}) = \frac{\sum_{i=0}^{k} t_{i}v_{i}}{\sum_{i=0}^{k} t_{i}}$. Then $\Phi : K \times I \to K$ defined by $\Phi(x,t) = (1-t)x+ tf(x)$ is a deformation retract of $K$ onto $L$.

\begin{remark}
By the unique barycentric representation of points in $\rr^{2n}$, $\Phi(S(x),t) = S(\Phi(x),t)$ for any $x \in K$ and $t \in I$ so that $\Phi$ is $S$-equivariant. Therefore, $K$ equivariantly deformation retracts onto $L$. In particular, the fixed set $K^S$ is homotopy equivalent to the fixed set $L^T$ and $K/\langle S\rangle$ is homotopy equivalent to $L/\langle T \rangle$.
\end{remark}

\section{The Equivariant Reflection Group Trick}\label{equireflection}

Suppose that we are given a space $M$ and a subspace $N \subset M$ such that $N$ is triangulated as a finite dimensional flag complex. Recall that a simplicial complex is a \emph{flag complex} if any finite set of vertices, which are pairwise connected by edges, spans a simplex. Let a discrete group $G$ act on $M$ so that $G$ stabilizes the subspace $N$ and $G$ acts on $N$ by simplicial automorphisms. Following \cite{DavLea}, we will associate a right-angled Coxeter group $W$ and construct a $(W \rtimes G)$-action on a space $\cu(M,N,G)$.

Let $I$ be a vertex set of $N$. Define a right-angled Coxeter group $W$ as follows. There is one generator $s_{i}$ for each $i \in I$. Relations are given by $s_{i}^{2} = 1$ and $(s_{i}s_{j})^{2} = 1$ if $\{i,j\}$ spans an edge in $N$. For $x \in N$, let $\sigma(x) = \{i \in I | x \in N_i\}$, where $N_i$ is the closed star of the vertex $i$ in the barycentric subdivision of $N$ and $W_{x}$ be the subgroup generated by $\{s_i | i \in \sigma(x)\}$.

Note that $G$ acts on $N$ by permuting vertices, so we can form $W \rtimes G$.

Define the space $\cu(M,N,G)$ by
$$\cu(M,N,G) = W \times M / \sim,$$
where $(w,x) \sim (w',x')$ if $x=x'$ and $w^{-1}w' \in W_{x}$. For $[w,x] \in \cu(M,N,G)$ and $(v,g) \in W \rtimes G$, the action of $W\rtimes G$ on $\cu(M,N,G)$ is defined by 
$$(v,g).[w,x] = [vw^{g},g .x].$$

\begin{remark}
This construction enjoys the following properties. For details or proofs, see \cite[Section 11.7]{DavBOOK}, \cite{Dav}.
\begin{enumerate}
\item If $M$ is contractible, then so is $\cu(M,N,G)$.
\item If $M$ is an $n$-dimensional manifold with boundary and $N= \partial M$, then $\cu(M,N,G)$ is an $n$-dimensional manifold.
\item Let $C(N)$ be the cone on $N$. Then $\cu(C(N),N,G)$ has a natural $\CAT(0)$ cubical structure so that the link of each vertex is isomorphic to $N$, and so that $W \rtimes G$ acts by a group of isometries. In particular, for any finite subgroup $F$ of  $W \rtimes G$, the fixed point set $\cu(C(N),N,G)^{F}$ is contractible.
\end{enumerate}
\end{remark}

A group action on a simplicial complex is said to be \emph{admissible} if, for any simplex, setwise stabilizers are equal to pointwise stabilizers. If the $G$-action on $N$ is admissible, we have the following.

\begin{lemma}\label{finitesubgroup}
Let $H$ be a finite subgroup in $G$. Then 
$$\cu(M,N,G)^{H} = \cu(M^{H},N^{H},V_{H}),$$
where $M^{H}$, and $N^{H}$ respectively, is the $H$-fixed set in $M$, and $N$ respectively, and $V_{H} = N_{G}(H)/H$.
\end{lemma}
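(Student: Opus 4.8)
The statement identifies the $H$-fixed set of the basic construction $\cu(M,N,G)$ with a basic construction built from the fixed data $(M^H, N^H, V_H)$. My plan is to unwind both sides as explicit quotients of product spaces and check they agree pointwise, using admissibility of the $G$-action on $N$ at the one place where it is needed. Recall $\cu(M,N,G) = W\times M/\!\sim$, so a point fixed by $(h,1)$ for every $h\in H$ is represented by some $[w,x]$ with $[w^h, hx]=[w,x]$ for all $h\in H$, i.e. $hx = x$ and $w^{-1}w^h \in W_x$ for all $h$. The first condition says $x\in M^H$. Now I must understand the second. Since $H$ fixes $x$ and acts admissibly on $N$, and $\sigma(x)\subseteq I$ is the vertex set of the carrier simplex of $x$ in $\sd N$, admissibility gives that $H$ fixes $\sigma(x)$ pointwise; hence $H$ acts trivially on the generators $\{s_i : i\in\sigma(x)\}$, so $W_x$ is pointwise fixed by the $H$-action on $W$, and the condition $w^{-1}w^h\in W_x$ becomes a statement purely about the coset $wW_x$. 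One checks this forces $w$ to lie (modulo $W_x$) in the fixed subgroup $W^H = C_W(H)$ — here one uses the standard fact for Coxeter groups that the fixed subgroup of a group acting by graph automorphisms, intersected with appropriately, behaves well; concretely, if $w^{-1}w^h\in W_x$ for all $h$ then $wW_x$ is $H$-invariant, and since $W_x$ is $H$-fixed, a short argument (choosing $w$ of minimal length in its coset, which is then unique) shows $w^h = w$.

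**Continuing.** On the other side, $\cu(M^H, N^H, V_H) = W' \times M^H/\!\sim$, where $W'$ is the right-angled Coxeter group on the vertex set $I^H$ of $N^H$ with commuting relations read off from edges of $N^H$, and $V_H = N_G(H)/H$ acts on $N^H$. The key algebraic identification is $W' \cong W^H$: the vertices of $N^H$ are exactly the $H$-fixed vertices of $N$ (using flagness/admissibility so that $N^H$ is a full subcomplex with the expected vertex set and edges), and a theorem of the type "the fixed subgroup of a right-angled Coxeter group under a group of graph automorphisms is the right-angled Coxeter group on the fixed subgraph" — this is where admissibility is essential, since it guarantees $H$ cannot swap two vertices joined by an edge without fixing them. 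Under $W'\cong W^H$ one matches special subgroups: for $x\in M^H\cap N^H$, the local group $W'_x$ (generated by $i\in I^H$ with $x\in N^H_i$) corresponds to $W_x\cap W^H = W_x^H = W_x$ (the last equality from the paragraph above). Finally the $V_H$-action on the left, $(v,g).[w,x]=[vw^g, gx]$ with $g\in N_G(H)$ representing $vH$, matches the induced action, because $N_G(H)$ is exactly the subgroup of $G$ preserving $M^H$ and $N^H$, and $H$ itself already acts trivially on the fixed data, so the action factors through $V_H = N_G(H)/H$.

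**Assembly.** Putting these together: the map sending $[w,x]\in \cu(M,N,G)^H$ (with $x\in M^H$, $w$ the minimal-length representative, automatically $H$-fixed) to $[w,x]\in \cu(M^H,N^H,V_H)$ under $W^H\cong W'$ is well-defined, bijective, continuous with continuous inverse, and $V_H$-equivariant; this is the claimed equality. I would organize the write-up as: (i) identify points of $\cu(M,N,G)^H$ with pairs $(w W_x, x)$, $x\in M^H$, $wW_x$ $H$-invariant; (ii) the Coxeter lemma $W^H = W(N^H$-graph$)$ and $W_x^H = W_x$ for $x\in M^H$, both invoking admissibility; (iii) the normalizer computation giving the $V_H$-action; (iv) conclude.

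**Main obstacle.** The hard part is step (ii): showing that the $H$-fixed subgroup of the right-angled Coxeter group $W$ is again a right-angled Coxeter group, namely the one on the fixed subgraph, and that the local subgroups $W_x$ are already $H$-fixed for $x\in M^H$. Both hinge delicately on admissibility — without it, $H$ could invert an edge and the fixed subgroup would acquire extra non-reflection elements, breaking the identification. I expect to need the normal form / reflection-subgroup theory for Coxeter groups (e.g. Deodhar's or Dyer's description of reflection subgroups, or just the Davis-complex $\CAT(0)$ geometry: $W^H$ acts on the fixed subcomplex of the Davis complex, which is the Davis complex of the fixed-subgraph Coxeter system) to make this rigorous rather than plausible.
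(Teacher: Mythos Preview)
Your proposal is correct and reaches the same conclusion, but the route differs from the paper's in one key respect. After establishing (as you do) that a fixed point $[w,x]$ has $x\in M^{H}$ and that $W_{x}$ is pointwise $H$-fixed by admissibility, the paper does \emph{not} pass through the centralizer $W^{H}$ and the general fixed-subgroup theorem for Coxeter groups. Instead it proves directly, by induction on the word length of $w$, that $w$ lies in the parabolic subgroup $W_{H}=\langle s_{i}:i\in I^{H}\rangle$: writing $w=s_{1}\cdots s_{n}$ reduced and $t_{i}=s_{i}^{h}$, one uses that $w^{-1}w^{h}\in W_{m}$ has finite order together with admissibility (which forces any two generators in the same $H$-orbit that commute to be equal) to conclude $s_{1}=t_{1}$, and then strips $s_{1}$ and recurses.

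Your factorization---first use uniqueness of the minimal-length coset representative to get $w\in W^{H}$, then invoke $W^{H}=W_{H}$---is cleaner conceptually, and the second step is indeed a known fact (the $H$-fixed subgroup of a Coxeter group under diagram automorphisms is generated by longest elements of orbit-parabolics that are finite; admissibility forces every such orbit in the right-angled case to be a singleton). The paper's argument buys self-containment: it avoids citing Steinberg/H\'ee or the Davis-complex fixed-set description, at the cost of a slightly ad hoc cancellation argument. Your approach buys transparency about exactly where admissibility enters, but you are right that step~(ii) is the load-bearing point and would need either a reference or the CAT(0) fixed-set argument you sketch to be complete.
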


\begin{remark} The above lemma is stated in \cite[Proposition 11.7.1]{DavBOOK} without a proof. We provide the proof below. Note also that $N^{H}$ is a flag complex.
\end{remark}

\begin{proof}

It is obvious that\, $\cu(M^{H},N^{H},V_{H})$ is a subspace of\, $\cu(M,N,G)$. 

Let $[w,m] \in \cu(M,N,G)^{H}$ be given. In order to prove that $\cu(M,N,G)^{H}$ is contained in $\cu(M^{H},N^{H},V_{H})$, it suffices to show that $m \in M^{H}$ and $w \in W_{H}$, where $W_{H}$ is the subgroup of $W$ generated by $\{s_{i}| i \,\,\textrm{is a vertex in}\,\, N^{H}\}$. For any $h \in H$,
$$\begin{array}{cc} &(1,h).[w,m] = [w^{h},h.m] = [w,m]\\\Rightarrow&h.m = m,\qquad w^{-1}w^{h} \in W_{m}.\end{array}$$

Since $h.m=m$ for all $h \in H$, it follows that $m \in M^{H}$. We prove that $w \in W_{H}$ by induction on the length $l(w)$ of $w$. To begin with, we point out that since the $G$-action on $M$ is admissible, every generator in $W_{m}$ is fixed by $H$.  In particular, $W_{m}$ is a subgroup in $W_{H}$. Also note that $W_{m}$ is finite. (Consider the simplex containing $m$ of minimal dimension.)

Suppose that $l(w)=1$, i.e. $w=w^{-1}$ . Since $W_{m}$ is finite, $ww^{h}$ has finite order. But this happens only if two vertices corresponding to $w$ and $w^{h}$ are connected. Admissibility implies that $w$ is fixed by $h$, and hence, $w \in W_{H}$.

Suppose that $w = s_1 \cdots s_n$ is a reduced word (of length $n$). Let $t_i = s_{i}^{h}$. 

Suppose $s_{1} \neq t_{1}$. Again, $w^{-1}w^{h}$ has finite order.
$$\begin{array}{ccc}& (w^{-1}w^{h})^{n} =  1\,\, \mathrm{for\,\,some}\,\, n\\\Rightarrow&(s_n \cdots s_1\cdot t_1 \cdots t_n) (s_n \cdots s_1\cdot t_1 \cdots t_n)\cdots(s_n \cdots s_1\cdot t_1 \cdots t_n)=1\end{array}$$

In order for the left hand side to be reduced to the identity, in particular, there exists $t_{i}$ for $2 \leq i \leq n$ such that $t_{i}$ commutes with $t_{1}$ and cancels with $s_{1}$, i.e. $t_{i}=s_{1}$. But this is impossible, because $s_{1}$ and $t_{1}$ do not commute. Therefore, $s_{1}=t_{1}$.
 
$$w^{-1}w^{h} = s_n \cdots s_1\cdot t_1 \cdots t_n = s_n \cdots s_2\cdot t_2 \cdots t_n \in W_{m}$$
By the induction hypothesis, $s_{1} w \in W_{H}$, so $w \in W_{H}$. 
\end{proof}

\section{The proof of Theorem \ref{mainthm}}\label{resultproof}

The proof of Theorem \ref{mainthm} consists of three steps. First, we use the equivariant embedding trick to embed a given finite simplicial complex $X$ into the manifold $M$ with an involution $\tau$ such that $M/\langle \tau \rangle$ is homotopy equivalent to $X$. Then we apply the equivariant reflection group trick on $M$ with boundary to obtain a contractible manifold on which some group $G$ acts. The quotient of the contractible manifold by $G$ will be homotopy equivalent to $X$. Finally, we show that the contractible manifold is the classifying space for proper $G$-actions. Additionally, we introduce a finite index torsion-free subgroup of $G$, which proves that $G$ is a virtual Poincar\'e duality group.\\

Let $X$ be a finite connected simplicial complex. Note that the equivariant embedding trick requires a simplicial complex with a periodic simplicial map. In this paper, we use the construction appearing in \cite{LeaMKT}. Applying \cite[Theorem A]{LeaMKT}, we obtain a finite connected locally $\CAT(0)$ cubical complex $Y$ with a cubical involution $\tau$ such that $Y/\langle\tau\rangle$ is homotopy equivalent to $X$. By passing to the barycentric subdivision, we may assume that $Y$ is a finite connected simplicial complex  and $\tau$ is a simplicial involution on $Y$. Now we apply the equivariant embedding trick introduced in Section \ref{equiembedding} to obtain a manifold $M$ with a boundary $N$ and a simplicial involution $\omega$ on $M$ such that $M$ equivariantly deformation retracts onto $Y$. By passing to the barycentric subdivision, we can assume that $N$ is a flag complex and a cyclic group of order two, $C_{2}=\langle \omega \rangle$, acts on $N$ admissibly. Note that $Y$ is locally $\CAT(0)$. Therefore, $M$ is aspherical and $M^{\omega}$ is homotopy equivalent to $Y^{\tau}$.

Next we apply the equivariant reflection group trick from Section \ref{equireflection} to obtain a space\, $\cu(M,N,C_{2})$ on which $W \rtimes C_{2}$ acts, where $W$ is a right-angled Coxeter group associated to $N$. Let $\tilm$ be the universal cover of $M$, $\tiln$ be the inverse image of $N$ in $\tilm$ and $\tilw$ be the associated right-angled Coxeter group to $\tiln$. Repeat the equivariant reflection group trick to obtain a space\, $\cu(\tilm,\tiln,\Gamma)$ on which $\tilw \rtimes \Gamma$ acts, where $\Gamma$ is the group of liftings of the $C_{2}$-action on $M$ to $\tilm$. Note that every torsion element in $\Gamma$ has order at most two and every finite subgroup of $\Gamma$ is cyclic of order two.

\begin{proposition}
\label{equi}
Let \,$\cu(M,N,C_2)$ and \,$\cu(\tilm,\tiln,\Gamma)$ be the spaces constructed above. Then
\begin{enumerate}
\item\label{p1} $\cu(\tilm,\tiln,\Gamma)$ and $\cu(M,N,C_2)$ are manifolds.
\item\label{p2} $\cu(\tilm,\tiln,\Gamma)$ is the universal cover of $\cu(M,N,C_2)$.
\item\label{p3} $\cu(\tilm,\tiln,\Gamma)/(\tilw \rtimes \Gamma)$ is homotopy equivalent to $X$.
\end{enumerate}
\end{proposition}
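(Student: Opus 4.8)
The plan is to establish the three parts in order, leaning on the properties of the basic construction collected in the Remark following Lemma~\ref{finitesubgroup}; only part~(\ref{p2}) requires real work. Part~(\ref{p1}) is immediate: the manifold $M$ produced by the embedding trick is the compact regular neighbourhood $K=\overline{U}$, hence a manifold with boundary, and $N=\partial M$; its universal cover $\tilm$ is then a manifold with boundary $\tiln=p^{-1}(N)$, where $p\colon\tilm\to M$ is the covering. So property~(2) of the Remark applies to both pairs and gives that $\cu(M,N,C_2)$ and $\cu(\tilm,\tiln,\Gamma)$ are manifolds of dimension $\dim M$.

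For part~(\ref{p2}) I would first produce the natural map $q\colon\cu(\tilm,\tiln,\Gamma)\to\cu(M,N,C_2)$. Since $p$ is a simplicial covering (on barycentric subdivisions) it carries the closed star $\tiln_i$ of a vertex $i$ of $\tiln$ homeomorphically onto the closed star $N_{p(i)}$ of $p(i)$, and sends edges to edges; hence $s_i\mapsto s_{p(i)}$ defines a homomorphism $\tilw\to W$ equivariant for $\Gamma\to C_2$, and one checks that $[w,x]\mapsto[\,\overline{w},\,p(x)\,]$ (with $\overline{w}$ the image of $w$ in $W$) is a well-defined, $(\tilw\rtimes\Gamma)$-equivariant map. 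Next I would show $q$ is a covering. The content here is that the basic construction turns a cover of the mirrored pair into a cover: the mirror structure on $\tilm$ pulled back from $M$ has one mirror for each connected component of each $p^{-1}(N_i)$, i.e. one for each vertex of $\tiln$ over $i$, so the Coxeter group of the pulled-back mirror structure is exactly $\tilw$; together with the local description of the basic construction (near $[w,x]$ it is the product of a neighbourhood of $x$ in $M$ with a combinatorial fan determined by the mirrors through $x$) this shows $q$ is a local homeomorphism, and a lifting argument, or the reference \cite[Section~11.7]{DavBOOK}, shows it is a covering. Alternatively one may exhibit $K=\ker(\tilw\rtimes\Gamma\to W\rtimes C_2)$ acting freely (freeness uses that $\tilw_x\to W_{p(x)}$ is injective and that deck transformations of $\tilm$ act freely) and properly discontinuously with quotient $\cu(M,N,C_2)$. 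Finally, since $M$ deformation retracts onto the locally $\CAT(0)$, hence aspherical, complex $Y$, the space $M$ is aspherical and $\tilm$ is contractible, so by property~(1) of the Remark $\cu(\tilm,\tiln,\Gamma)$ is contractible; a simply connected covering space is the universal cover, which proves~(\ref{p2}).

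For part~(\ref{p3}) I would take the quotient in two stages. The normal subgroup $\tilw\trianglelefteq\tilw\rtimes\Gamma$ acts on $\cu(\tilm,\tiln,\Gamma)=\tilw\times\tilm/\sim$ by $v\cdot[w,x]=[vw,x]$, and $[w,x]\mapsto x$ identifies $\cu(\tilm,\tiln,\Gamma)/\tilw$ with $\tilm$, equivariantly for the residual group $(\tilw\rtimes\Gamma)/\tilw=\Gamma$ acting on $\tilm$ in the given way; hence $\cu(\tilm,\tiln,\Gamma)/(\tilw\rtimes\Gamma)\cong\tilm/\Gamma$. Since $\Gamma$ is the group of lifts of the $\langle\omega\rangle$-action on $M$, the kernel of $\Gamma\to\langle\omega\rangle$ is the deck group $\pi_1(M)$, with $\tilm/\pi_1(M)=M$ and induced action the original $\langle\omega\rangle$-action, so $\tilm/\Gamma\cong M/\langle\omega\rangle$. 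Finally the equivariant deformation retraction of $M$ onto $Y$ descends to a deformation retraction of $M/\langle\omega\rangle$ onto $Y/\langle\tau\rangle$, which is homotopy equivalent to $X$ by the choice of $Y$ via \cite[Theorem~A]{LeaMKT}; concatenating these identifications yields $\cu(\tilm,\tiln,\Gamma)/(\tilw\rtimes\Gamma)\simeq X$.

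The main obstacle is the covering claim in part~(\ref{p2}): one must verify that $\tilw$ really is the Coxeter group of the \emph{pulled-back} mirror structure on $\tilm$ — so that no mirror is merged or omitted — and that $q$ is a local homeomorphism. Parts~(\ref{p1}) and~(\ref{p3}) are, respectively, a direct appeal to the cited properties of the construction and elementary manipulation of quotients and equivariant homotopies.
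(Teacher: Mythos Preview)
Your argument is correct and follows the same line as the paper's own proof: part~(\ref{p1}) via property~(2) of the basic construction, part~(\ref{p2}) by exhibiting a covering and then invoking contractibility of $\cu(\tilm,\tiln,\Gamma)$ from contractibility of $\tilm$, and part~(\ref{p3}) by collapsing the Coxeter factor and then passing to $M/\langle\omega\rangle\simeq Y/\langle\tau\rangle\simeq X$. The paper is much terser---it simply declares ``it is clear that $\cu(\tilm,\tiln,\Gamma)$ is a cover of $\cu(M,N,C_2)$'' and writes the chain $\cu(\tilm,\tiln,\Gamma)/(\tilw\rtimes\Gamma)\simeq\cu(M,N,C_2)/(W\rtimes C_2)\simeq M/C_2\simeq X$---whereas you actually supply the covering map and the two-stage quotient; note only that the Remark you want (listing the three properties of the basic construction) sits \emph{before} Lemma~\ref{finitesubgroup}, not after it.
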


\begin{proof} 
The first statement follows from the fact that $M$ and $\tilm$ are manifolds with boundary. It is clear that $\cu(\tilm,\tiln,\Gamma)$ is a cover of $\cu(M,N,C_2)$. Since $M$ is aspherical, $\tilm$ is contractible and so is $\cu(\tilm,\tiln,\Gamma)$. This proves the second statement. By construction, 

$$\cu(\tilm,\tiln,\Gamma)/(\tilw \rtimes \Gamma) \simeq \cu(M,N,C_2)/ (W\rtimes C_{2}) \simeq M/C_{2} \simeq X,$$
where $\simeq$ is a homotopy equivalence.
\end{proof}

We compare the $(\tilw \rtimes \Gamma)$-action on the space\, $\cu(\tilm,\tiln,\Gamma)$\,\, with the same action on\, $\cu(C(\tiln),\tiln,\Gamma)$, and prove $\cu(\tilm,\tiln,\Gamma) = \ebar(\tilw \rtimes \Gamma)$. Denote the image of $\{\tilsw\} \times \tilm$ in $\cu(\tilm, \tiln, \Gamma)$ by $\tilsw\tilm$ and the image of $\{\tilsw\} \times (\tilm \setminus \tiln)$ by $int(\tilsw\tilm)$. First, we prove that all stabilizers are finite.

\begin{proposition}
Let $H$ be a subgroup of  $\tilw \rtimes \Gamma$ fixing some point in $\cu(\tilm,\tiln,\Gamma)$. Then $H$ is finite.
\end{proposition}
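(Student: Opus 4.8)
The plan is to conjugate $H$ so that it fixes a point of the special form $[1,m]$, and then to bound the pointwise stabiliser of $[1,m]$ directly from the equivalence relation defining $\cu(\tilm,\tiln,\Gamma)$, exhibiting it inside the product of two groups that are already known to be finite.

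Suppose $H$ fixes $[\tilsw,m]$. Conjugating by $(\tilsw^{-1},1)\in\tilw\rtimes\Gamma$ replaces $H$ by an isomorphic subgroup fixing $(\tilsw^{-1},1).[\tilsw,m]=[1,m]$, so it suffices to bound the full pointwise stabiliser $H'$ of $[1,m]$. If $(v,g)\in H'$ then $(v,g).[1,m]=[v\cdot 1^{g},g.m]=[v,g.m]=[1,m]$, which by the definition of $\sim$ forces $g.m=m$ and $v\in\tilw_{m}$. Hence the projection $p\colon\tilw\rtimes\Gamma\to\Gamma$ onto the second factor carries $H'$ into $\Gamma_{m}:=\{g\in\Gamma: g.m=m\}$, while $\ker(p|_{H'})$ consists of pairs $(v,1)\in H'$, which by the same computation have $v\in\tilw_{m}$; so $|H'|\le|\tilw_{m}|\cdot|\Gamma_{m}|$.

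It then remains to see that both factors are finite. The set $\sigma(m)=\{i: m\in\tiln_{i}\}$ is empty when $m\notin\tiln$ and is otherwise the vertex set of the carrier simplex of $m$ in $\tiln$; in either case the generators $s_{i}$ ($i\in\sigma(m)$) of $\tilw_{m}$ pairwise commute, so $\tilw_{m}$ is a finite elementary abelian $2$-group. For $\Gamma_{m}$, recall that $\Gamma$ contains $\pi_{1}(M)$ — acting freely on $\tilm$ by deck transformations — as a subgroup of index two; hence $\Gamma_{m}\cap\pi_{1}(M)=\{1\}$, so $\Gamma_{m}$ embeds into $\Gamma/\pi_{1}(M)\cong C_{2}$ and $|\Gamma_{m}|\le 2$. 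Therefore $H'$, and so $H$, is finite.

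The computation is short, being essentially bookkeeping with the relation $\sim$ together with the already-noted finiteness of the local Coxeter groups $\tilw_{m}$; the only step that uses more than the formal set-up of $\cu(-,-,-)$ is the finiteness of $\Gamma_{m}$, which is exactly where it matters that $\Gamma$ is the group of lifts of the $C_{2}$-action, so that the freely-acting normal subgroup $\pi_{1}(M)$ has finite index in $\Gamma$.
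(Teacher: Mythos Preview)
Your argument is correct and follows essentially the same route as the paper: conjugate so that the fixed point has the form $[1,m]$, read off from the relation $\sim$ that any $(v,g)$ in the stabiliser has $g.m=m$ and $v\in\tilw_m$, and then bound the stabiliser by the finiteness of $\tilw_m$ together with $|\Gamma_m|\le 2$. Your presentation is in fact a bit tidier than the paper's, since you avoid the interior/boundary case split by treating $\tilw_m$ uniformly and you phrase the bound via the projection $p$ and the index-two free subgroup $\pi_1(M)\trianglelefteq\Gamma$, which is exactly the content of the paper's remark that at most one nontrivial lift can fix $m$. One small imprecision: $\sigma(m)$ need not equal the full vertex set of the carrier simplex of $m$ in $\tiln$ (for a point strictly between a vertex $i$ and the barycentre of an edge $\{i,j\}$ one gets $\sigma(m)=\{i\}$, not $\{i,j\}$); what is true, and what you actually use, is that any two indices in $\sigma(m)$ span an edge of $\tiln$, so the generators pairwise commute and $\tilw_m$ is a finite elementary abelian $2$-group.
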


\begin{proof}

Suppose that $H$ fixes some point in $int(\tilsw\tilm)$ for some $\tilsw$.

Then $H' = (\tilsw,1)^{-1}H(\tilsw,1)$ fixes some point in $int(\tilm)$. Denote this point by $[1,x]$. For any $(\tilv,\gamma) \in H'$,
$$(\tilv,\gamma).[1,x] = [\tilv,\gamma.x] =[1,x]  \Rightarrow  \gamma.x=x,\,\, \tilv \in \tilw_{x}$$

Since $x \in \tilm \setminus \tiln$(recall that $\tiln =\partial \tilm$, and $x \in int(\tilm)$.), $\tilw_{x}$ is trivial. It follows that $H'$ is a subgroup of $\Gamma$. Recall that $\Gamma$ is the group of liftings of the $C_{2}$-action on $M$ to $\tilm$. Therefore, if a nontrivial element $\gamma$ fixes some point $x$, $\gamma$ is the only nontrivial element in $\Gamma$ fixing $x$. This tells us that $H'$ must be finite of order $2$.

Suppose that the fixed point is not contained in $int(\tilsw\tilm)$ for any $\tilsw$. As in the previous case, choose some $\tilsw'$ so that $H''=(\tilsw',1)^{-1}H(\tilsw',1)$ fixes some point in the image of $\tiln$ in $\cu(\tilm,\tiln,\Gamma)$. Denote such a point by $[1,y]$. For any $(\tilv',\gamma') \in H''$,
$$(\tilv',\gamma').[1,y] = [\tilv',\gamma'.y] =[1,y]  \Rightarrow  \gamma'.y=y,\,\, \tilv' \in \tilw_{y}$$

As before, we have at most two possibilities for $\gamma'$. Furthermore, $\tilw_{y}$ is finite. (Consider the simplex containing $y$ of minimal dimension.) Therefore, $H''$ is finite, and hence, $H$ is finite.

\end{proof}

\begin{theorem}\label{clas}
$\cu(\tilm,\tiln,\Gamma) = \ebar(\tilw \rtimes \Gamma)$.
\end{theorem}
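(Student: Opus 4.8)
The plan is to verify the two conditions defining a model for $\ebar(\tilw\rtimes\Gamma)$: that $\cu(\tilm,\tiln,\Gamma)$ is a $(\tilw\rtimes\Gamma)$-CW-complex with finite cell stabilizers, and that $\cu(\tilm,\tiln,\Gamma)^{H}$ is contractible for every finite subgroup $H\le\tilw\rtimes\Gamma$. It is a $(\tilw\rtimes\Gamma)$-CW-complex because it is glued from simplicially triangulated copies of $\tilm$, on which $\Gamma$ acts simplicially; and a cell stabilizer fixes the barycenter of the cell, hence is finite by the preceding proposition. Moreover, the first property of the basic construction recalled in Section~\ref{equireflection}, applied with the contractible chamber $\tilm$, already shows that $\cu(\tilm,\tiln,\Gamma)=\cu(\tilm,\tiln,\Gamma)^{\{1\}}$ is contractible, so only the nontrivial finite subgroups require attention.

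The core of the argument is that $\cu(\tilm,\tiln,\Gamma)^{H}$ is again a space of the type constructed in Section~\ref{equireflection}. Writing $\bar H$ for the image of $H$ under the projection $\tilw\rtimes\Gamma\to\Gamma$ and $H_{0}=H\cap\tilw$, I would extend the word-length and admissibility analysis in the proof of Lemma~\ref{finitesubgroup} (which handles $H\le\Gamma$, i.e. $H_{0}=1$) to an arbitrary finite $H$, now keeping track of the $\Gamma$-component of each element $(\tilv,\gamma)\in H$. The conclusion should be that $\cu(\tilm,\tiln,\Gamma)^{H}$ is homeomorphic to a basic construction $\cu(M',N',G')$ whose chamber $M'$ is $\tilm^{\bar H}$ when $H_{0}=1$, and, when $H_{0}\neq1$ (so that $H_{0}$ is conjugate in $\tilw$ to a finite special subgroup $\tilw_{\sigma}$ for some simplex $\sigma$ of $\tiln$), is the closed star, in $\tiln^{\bar H}$, of the barycenter of $\sigma$. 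Carrying out this bookkeeping carefully — or, alternatively, invoking the general form of \cite[Proposition~11.7.1]{DavBOOK} — is the step I expect to be the main obstacle.

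Granting that, it remains to observe that the chamber $M'$ is contractible in every case, for then the first property of the basic construction from Section~\ref{equireflection}, applied with $M'$ as chamber, shows $\cu(M',N',G')$, hence $\cu(\tilm,\tiln,\Gamma)^{H}$, is contractible, which completes the verification. A closed star of a simplex is contractible, which disposes of the case $H_{0}\neq1$. When $H_{0}=1$ we need $\tilm^{\bar H}$ to be contractible; since $\bar H\le\Gamma$ is trivial or cyclic of order two, this reduces to showing that $\tilm^{\gamma}$ is nonempty and contractible for each involution $\gamma\in\Gamma$. For this I would use that $M$ deformation retracts onto $Y$ equivariantly with respect to the involution, so that $\tilm$ deformation retracts $\Gamma$-equivariantly onto the universal cover $\tilY$ of $Y$; restricting to $\langle\gamma\rangle$-fixed sets then gives a homotopy equivalence $\tilm^{\gamma}\simeq\tilY^{\gamma}$. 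Since $Y$ is locally $\CAT(0)$, the space $\tilY$ is a complete $\CAT(0)$ space, and $\Gamma$ acts on it by cubical automorphisms, hence isometrically; a finite-order isometry of a complete $\CAT(0)$ space has nonempty fixed set, and that fixed set is convex, hence contractible. Therefore $\tilm^{\gamma}$, and with it every $\cu(\tilm,\tiln,\Gamma)^{H}$, is contractible, so $\cu(\tilm,\tiln,\Gamma)=\ebar(\tilw\rtimes\Gamma)$.
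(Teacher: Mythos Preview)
Your overall plan---show that each fixed set is again a basic construction with a contractible chamber---is in the right spirit, and your argument that $\tilm^{\gamma}$ is contractible via the $\CAT(0)$ structure on the universal cover of $Y$ is exactly what the paper uses. The gap is in your case analysis. The condition $H_{0}=H\cap\tilw=1$ does \emph{not} force $H$ to be conjugate in $\tilw\rtimes\Gamma$ to a subgroup of $\Gamma$. For instance, if $\gamma\in\Gamma$ is an involution fixing a vertex $i$ of $\tiln$, then $H=\langle(s_{i},\gamma)\rangle$ has $H_{0}=1$ and $\bar H=\langle\gamma\rangle$, yet a parity count on word length shows there is no $u\in\tilw$ with $u=s_{i}u^{\gamma}$, so $H$ is not conjugate into $\Gamma$; and $(s_{i},\gamma)$ moves every point of $int(\tilm)$ into $s_{i}\tilm$, so no translate of $\tilm^{\bar H}=\tilm^{\gamma}$ can serve as a chamber of $\cu(\tilm,\tiln,\Gamma)^{H}$. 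Lemma~\ref{finitesubgroup} (which is \cite[Proposition~11.7.1]{DavBOOK}) genuinely requires $H\le G$; the extension you sketch does not yield the description you state.

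The paper sidesteps this by comparing with the auxiliary $\CAT(0)$ cube complex $\cu(C(\tiln),\tiln,\Gamma)$, in which every finite-subgroup fixed set is automatically contractible. The dichotomy is whether $H$ fixes a cone point there. If it does, then conjugating by the corresponding element of $\tilw$ places $H$ inside $\Gamma$, and Lemma~\ref{finitesubgroup} together with your $\CAT(0)$ argument for $\tilm^{\gamma}$ finishes exactly as you wrote. If it does not, then $H$ also fixes no point of $int(\tilsw\tilm)$ for any $\tilsw$, so $\cu(\tilm,\tiln,\Gamma)^{H}$ lies entirely in the common part of the two constructions and hence equals $\cu(C(\tiln),\tiln,\Gamma)^{H}$, which is contractible. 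This second branch handles in one stroke all the finite subgroups with $H_{0}\neq 1$ as well as those with $H_{0}=1$ that are not conjugate into $\Gamma$, without ever exhibiting the fixed set as a basic construction.
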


\begin{proof}

It suffices to prove that the fixed point set by a finite subgroup is contractible. As mentioned before, we consider  the $(\tilw \rtimes \Gamma)$-action on $\cu(C(\tiln),\tiln,\Gamma)$.

Let $F$ be a finite subgroup of $\tilw \rtimes \Gamma$. Recall that $\cu(C(\tiln),\tiln,\Gamma)^{F}$ is contractible. (See Remark 4.) In particular, it is nonempty.

Suppose that $F$ does not fix any cone point in $\cu(C(\tiln),\tiln,\Gamma)$. Then $F$ fixes no point in $int(\tilsw\tilm)$ for any $\tilsw$. In other words, 

$$\cu(\tilm,\tiln,\Gamma)^{F} \subset \displaystyle{\cu(\tilm,\tiln,\Gamma) - \bigcup_{\tilsw \in \tilw} int(\tilsw\tilm)}.$$ Therefore,

$$\cu(\tilm,\tiln,\Gamma)^{F} = \cu(C(\tiln),\tiln,\Gamma)^{F}$$
and $\cu(\tilm,\tiln,\Gamma)^{F}$ is contractible.

Suppose that $F$ fixes some cone point in $\cu(C(\tiln),\tiln,\Gamma)$. Choose some $\tilsw''$ so that $F' = (\tilsw'',1)^{-1}F(\tilsw'',1)$ fixes the cone point of $int(C\tiln)$. Denote the cone point by $c$. For any $(\tilv'',\gamma'') \in F'$,
$$(\tilv'',\gamma''),[1,c] = [\tilv'',\gamma''.c] = [1,c] \Rightarrow \gamma''.c=c,\,\,\tilv'' \in \tilw_{c}.$$

Since $c$ is a cone point, $\tilv''$ is trivial. It follows that $F'$ is a finite subgroup of $\Gamma$, and hence, cyclic of order $2$. By Lemma \ref{finitesubgroup}, it follows that $\cu(\tilm,\tiln,\Gamma)^{F'}$ is $\cu(\tilm^{F'},\tiln^{F'},V_{F'})$. Recall $M$ is equivariantly homotopy equivalent to a locally $\mathrm{CAT}(0)$ space $Y$. Therefore, $\tilm^{F'}$ is homotopy equivalent to the fixed set of $CAT(0)$ space by a cyclic group of order $2$. It follows that $\tilm^{F'}$ is contractible, hence so is $\cu(\tilm^{F'},\tiln^{F'},V_{F'})$. Finally, $\cu(\tilm,\tiln,\Gamma)^{F} = (\tilsw'',1).\cu(\tilm,\tiln,\Gamma)^{F'}$ is contractible.

\end{proof}

\begin{remark}
Consider the commutator subgroup $T$ of $W$ and its inverse image $\tilt$ in $\tilw$. Since $\tilt$ is torsion-free and of finite index in $\tilw$, $\tilt \rtimes \pi_{1}(M)$ is a torsion-free finite index subgroup of $\tilw \rtimes \Gamma$. Since $\cu(\tilm,\tiln,\Gamma) /  (\tilt \rtimes \pi_{1}(M))$ is an aspherical closed manifold, 
$\tilt \rtimes \pi_{1}(M)$ is a Poincar\'e duality group. This verifies that $\tilw \rtimes \Gamma$ is a virtual Poincar\'e duality group.
\end{remark}

\bibliography{referencesforproject2}{}
\bibliographystyle{plain}

\leftline{\bf Author's address}

\noindent 
Raeyong Kim: 

Department of Mathematics, The Ohio State University, 

231 West 18th Avenue, Columbus, Ohio 43210-1174, United States.  

{\tt kimr@math.ohio-state.edu}

\end{document}